\def\draft{y}
\documentclass[12pt]{amsart}
\usepackage[headings]{fullpage}
\usepackage{amssymb,epic,eepic,epsfig}
\usepackage{graphicx}
\usepackage{texdraw}
\usepackage{url}
\usepackage[bookmarks=true,%
    colorlinks=true,%
    linkcolor=blue,%
    citecolor=blue,%
    filecolor=blue,%
    menucolor=blue,%
    urlcolor=blue,%
    breaklinks=true]{hyperref}


\newtheorem{theorem}{Theorem}[section]
\newtheorem{proposition}{Proposition}[section]
\theoremstyle{definition}
\newtheorem{lemma}[proposition]{Lemma}
\newtheorem{definition}[proposition]{Definition}
\newtheorem{remark}[proposition]{Remark}
\newtheorem{corollary}[proposition]{Corollary}

\def\printname#1{
        \if\draft y
                \smash{\makebox[0pt]{\hspace{-0.5in}
                        \raisebox{8pt}{\tt\tiny #1}}}
        \fi
}

\newcommand{\psdraw}[2]
         {\begin{array}{c} \hspace{-1.3mm}
        \raisebox{-4pt}{\epsfig{figure=draws/#1.eps,width=#2}}
        \hspace{-1.9mm}\end{array}}

\newlength{\standardunitlength}
\setlength{\standardunitlength}{0.0125in}

\catcode`\@=11
\long\def\@makecaption#1#2{%
     \vskip 10pt

\setbox\@tempboxa\hbox{
       \small\sf{\bfcaptionfont #1. }\ignorespaces #2}%
     \ifdim \wd\@tempboxa >\captionwidth {%
         \rightskip=\@captionmargin\leftskip=\@captionmargin
         \unhbox\@tempboxa\par}%
       \else
         \hbox to\hsize{\hfil\box\@tempboxa\hfil}%
     \fi}
\font\bfcaptionfont=cmssbx10 scaled \magstephalf
\newdimen\@captionmargin\@captionmargin=2\parindent
\newdimen\captionwidth\captionwidth=\hsize
\catcode`\@=12

\newcommand{\tr}{\operatorname{tr}}

\def\lbl#1{\label{#1}\printname{#1}}


\def\BN{\mathbb N}
\def\BZ{\mathbb Z}

\def\BQ{\mathbb Q}

\def\BR{\mathbb R}
\def\BC{\mathbb C}

\def\a{\alpha}

\def\l{\lambda}

\def\w{\omega}

\def\e{\epsilon}

\def\d{\delta}
\def\b{\beta}

\def\longto{\longrightarrow}
\def\w{\omega}
\def\SL{\mathrm{SL}}
\def\PSL{\mathrm{PSL}}

\def\w{\omega}

\def\calA{\mathcal{A}}

\def\d{\delta}

\def\fg{\mathfrak{g}}
\def\coeff{\mathrm{coeff}}

\begin{document}


\title[The Newton polygon of a recurrence sequence
and its role in TQFT]{The Newton polygon of a recurrence sequence 
 of polynomials and its role in TQFT}
\author{Stavros Garoufalidis}
\address{School of Mathematics \\
         Georgia Institute of Technology \\
         Atlanta, GA 30332-0160, USA \newline 
         {\tt \url{http://www.math.gatech.edu/~stavros }}}
\email{stavros@math.gatech.edu}
\thanks{The author was supported in part by NSF. \\
\newline
1991 {\em Mathematics Classification.} Primary 57N10. Secondary 57M25.
\newline
{\em Key words and phrases: Holonomic sequences, $q$-holonomic sequences,
recurrent sequences, generalized power sums, categorification, 
homological algebra, filtered chain complexes, 
A-polynomial, character variety, 3-manifolds, Dehn filling, quasi-polynomials,
Newton polytopes, TQFT, Lech-Mahler-Skolem theorem.
}
}

\date{October 24, 2012}


\begin{abstract}
The paper contains a combinatorial theorem (the sequence of Newton polygons
of a reccurent sequence of polynomials is quasi-linear) and two applications 
of it in classical and quantum topology, namely in the behavior of the 
$A$-polynomial and a fixed quantum invariant (such as the Jones polynomial)
under filling. Our combinatorial theorem, which complements results of 
Calegari-Walker \cite{CW} and the author \cite{Ga4}, 
occupies the bulk of the paper and 
its proof requires the Lech-Mahler-Skolem theorem of $p$-adic analytic number
theory combined with basic principles in polyhedral and tropical geometry.
\end{abstract}

\maketitle

\tableofcontents

\section{Introduction}
\lbl{sec.intro}

\subsection{Filtered chain complexes versus holonomy}
\lbl{sub.summary}

Filtered chain complexes and their associated spectral sequences and
exact triangles are standard tools of Homological Algebra that have found
numerous applications in the deep categorification theories of Khovanov, 
Kronheimer-Mrowka, Ozsv\'ath-Sz\'abo and many others; 
see \cite{Kh,KM,OS}. For instance the genus of
a knot in 3-space can be effectively computed by the Knot Homology of 
\cite{OS}. 

On the other hand, one has the TQFT invariants 
of knotted 3-dimensional objects, and a good example to keep in mind
is the famous Jones polynomial of a knot; see \cite{Jo}. 
There are several known and conjectured connections between the (colored) 
Jones polynomial of a knot and the geometry and topology of the 
knot complement. In particular, the colored
Jones polynomial determines the Alexander polynomial (\cite{B-NG,GL2}), and is 
conjectured to determine 
\begin{itemize}
\item[(a)]
the Volume of the knot (\cite{Ka,MM}), 
\item[(b)] 
the $A$-polynomial of the knot via the AJ-Conjecture (\cite{Ga1}), 
\item[(c)]
at least two boundary slopes of incompressible surfaces of the knot complement
via the Slope Conjecture (\cite{Ga4}), and
\item[(d)]
the invariant trace field of a hyperbolic knot, via the
subleading asymptotics to the Volume Conjecture (\cite{Ga2,DGLZ,GZ})
\end{itemize}
Knot Homology and TQFT have their own strengths. A major advantage of Knot 
Homology is its functorial nature, which numerical TQFT invariants (such
as the Jones polynomial) lack. What concept plays the role of functoriality
in TQFT? We argue that the notions of {\em holonomy} and {\em $q$-holonomy}
play this role in TQFT. We illustrate this principle with two independent 
results.
\begin{itemize}
\item
We study the behavior of the $A$-polynomial and quantum 
invariants (such as the Jones or Alexander polynomials) under 1-parameter 
fillings of a 2-cusped manifold, see Theorems \ref{thm.1} and \ref{thm.2}
below. In the $A$-polynomial case, it divides a holonomic sequence of 
polynomials in 2-variables, and in the quantum invariant case, it is a 
holonomic sequence of polynomials in one variable. 
\item
We prove that the Newton polygon $N_n$ of a holonomic sequence of polynomials 
is quasi-linear; see Theorem \ref{thm.3}. This complements results 
of \cite{CW} and \cite{Ga4}.
\end{itemize}
Holonomy (and $q$-holonomy) was studied
extensively by Zeilberger; see \cite{Z}. 
$q$-holonomic sequences of polynomials in one variable appeared in Quantum 
Topology in \cite{GL1}, where it was shown that the colored Jones function 
of a knot and an arbitrary simple Lie algebra is $q$-holonomic; see 
\cite[Thm.1]{GL1}.

Holonomic sequences of multivariable polynomials are easier to analyze, 
and they
appear naturally when one studies geometrically similar families of knots,
such as those that arise from filling of all but one components of a link.
This is not difficult to prove, but it is not widely known. 
Let us recall what is a holonomic sequence following Zeilberger; see \cite{Z}. 
Let $K=\BQ(x_1,\dots,x_r)$ denote the field of rational functions in $r$ 
variables $x_1,\dots,x_r$.

\begin{definition}
\lbl{def.holo}
We say that a sequence of rational functions $R_n \in K$ 
(defined for all integers $n$) is {\em recurrent} (or,
{\em constant coefficient holonomic}) if it satisfies a linear 
recursion with constant coefficients. In other words, there exists a natural
number $d$ and $c_k \in K$ for $k=0,\dots,d$ with $c_d \neq 0$
such that for all integers $n$ we have:
\begin{equation}
\lbl{eq.recRR}
\sum_{k=0}^d c_k R_{n+k}=0
\end{equation}
\end{definition}
Depending on the circumstances, one can restrict attention to sequences
indexed by the natural (rather than the integer) numbers. Please note that
the coefficients of the recursion \eqref{eq.recRR} are constant, independent
of $n$. If we allow them to polynomially depend on $n$, the corresponding 
sequence is by definition {\em holonomic}. Our two applications discussed in
Theorems \ref{thm.1} and \ref{thm.2} below concern reccurent sequences.

\subsection{The behavior of the $A$-polynomial under filling}
\lbl{sub.filling}

The $A$-polynomial $A_M$ of an oriented hyperbolic 3-manifold $M$ with 
one cusp was introduced in \cite{CCGLS}. We will assume that $A_M$ parametrizes
a {\em geometric component} of the $\SL(2,\BC)$ character variety.
$A_M$ is a 2-variable polynomial 
which describes the dependence of the eigenvalues of a meridian and longitude
under any representation of $\pi_1(M)$ into $\SL(2,\BC)$. 
The $A$-polynomial 
plays a key role in two problems:
\begin{itemize}
\item
the deformation of the hyperbolic structure of $M$,
\item
the problem of exceptional (i.e., non-hyperbolic) fillings of $M$.
\end{itemize}
Knowledge about the $A$-polynomial (and often, of its Newton polygon)
is translated directly into information of the above problems, and vice-versa.
This key property was explained and exploited by Culler-Shalen and 
Gordon-Luecke in \cite{CGLS,CCGLS,Go}. Technically, the $\SL(2,\BC)$ character
variety of $M$ has several components, and the unique discrete faithful
$\PSL(2,\BC)$ representation of the oriented manifold $M$ always lifts to 
as many $\SL(2,\BC)$ representations as the order of the finite group
$H_1(M,\BZ/2)$; see \cite{Cu}. When $M$ is a complement of a hyperbolic knot 
in an oriented integer homology sphere, the $A$-polynomial of $M$ is defined 
to be the $\SL(2,\BC)$ lift of the geometric $\PSL(2,\BC)$
component of $M$. 

Our first goal is to describe the behavior of the $A$-polynomial
under filling one of the cusps of a 2-cusped hyperbolic 3-manifold.
To state our result, consider an oriented hyperbolic 3-manifold $M$ which
is the complement of a hyperbolic link in a homology 3-sphere.
Let $(\mu_1,\l_1)$ and $(\mu_2,\l_2)$ denote pairs of meridian-longitude curves
along the two cusps $C_1$ and $C_2$ of $M$, and let $M_n$ denote the result of 
$-1/n$ filling on $C_2$. Let $A_n(m_1,l_1)$ denote the $A$-polynomial of $M_n$
with the meridian-longitude pair $(\mu_1,\l_1)$ inherited from $M$.

\begin{theorem}
\lbl{thm.1}
For every 3-manifold $M$ as above, there exists a recurrent sequence 
$R_n(m_1,l_1) \in \BQ(m_1,l_1)$ such that 
for all but finitely many integers $n$, $A_n(m_1,l_1)$ divides the 
numerator of $R_n(m_1,l_1)$.
In addition, a recursion for $R_n$ can be computed explicitly via elimination,
from an ideal triangulation of $M$.
\end{theorem}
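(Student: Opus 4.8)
The plan is to produce the recurrent sequence $R_n$ by a gluing-variety/elimination argument applied to an ideal triangulation $\mathcal{T}$ of $M$, using the fact that $-1/n$ filling on $C_2$ is encoded by adding a single linear equation that depends on $n$ in a \emph{polynomially controlled} way. First I would set up Thurston's gluing equation variety: fix an ideal triangulation $\mathcal{T}$ of $M$ with $N$ tetrahedra, shape parameters $z_1,\dots,z_N$, and the usual gluing equations together with the two cusp equations. This gives an affine variety $X(\mathcal{T})$ whose coordinate ring is a quotient of $\BQ[z_1^{\pm},(1-z_1)^{\pm},\dots]$, and the holonomies of $\mu_i,\l_i$ are Laurent monomials $\mathcal{M}_i,\mathcal{L}_i$ in the $z_j$ and $(1-z_j)$. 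The $A$-polynomial $A_M(m_1,l_1,m_2,l_2)$ (or rather its geometric component) is obtained, up to components, by eliminating the $z_j$ from the ideal generated by the gluing equations and $m_i=\mathcal{M}_i$, $l_i=\mathcal{L}_i$.

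Next I would incorporate the filling. Performing $-1/n$ surgery on $C_2$ imposes the relation $\mu_2^{-1}\l_2^{\,n}=1$ on $\pi_1$, which on the character variety becomes the single polynomial equation $\mathcal{M}_2^{-1}\mathcal{L}_2^{\,n}=1$, i.e. $\mathcal{L}_2^{\,n}=\mathcal{M}_2$. Clearing denominators, this is a polynomial equation in the $z_j$ whose coefficients are polynomials in $n$ only through the exponent $n$ appearing on a fixed monomial — equivalently, it is the coefficient-wise specialization at $t=n$ of a one-parameter family of polynomials of the form $\mathcal{L}_2^{\,t}\mathcal{M}_2^{-1}-1$, which can be flattened into an honest polynomial identity by writing $\mathcal{L}_2^{\,t}$ via a new variable. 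The key point: adjoining this equation to the gluing ideal and then eliminating $z_1,\dots,z_N$ produces, for each $n$, a polynomial $R_n(m_1,l_1)$ that contains $A_n(m_1,l_1)$ as a factor of its numerator, because the geometric $\PSL(2,\BC)$ point of $M_n$ (for $n$ large, by Thurston's hyperbolic Dehn surgery theorem the filled manifolds are hyperbolic) lies on $X(\mathcal{T})$ and hence survives the elimination.

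The main obstacle — and where the real content lies — is showing that the sequence $R_n$ so produced is \emph{recurrent}, i.e. satisfies a linear recursion with constant coefficients in $\BQ(m_1,l_1)$. I would handle this via a resultant/Bezoutian presentation: express $R_n$ as a suitable iterated resultant (or a determinant of a Sylvester-type matrix, e.g. a Macaulay or sparse resultant) of the fixed gluing polynomials together with the single $n$-dependent polynomial $\mathcal{L}_2^{\,n}-\mathcal{M}_2$. Expanding that determinant, every entry is either a fixed polynomial in $m_1,l_1$ and the $z_j$ or an entry of the form (monomial)$\cdot$(fixed monomial)$^{\,n}$; so each matrix entry is a $\BC$-linear combination of finitely many fixed functions times $\gamma^{\,n}$ for finitely many algebraic $\gamma$. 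The determinant is then a polynomial expression in such entries, hence itself a finite $\BC[m_1^{\pm},l_1^{\pm},z^{\pm}]$-linear combination of terms $\prod_i \gamma_i^{\,a_i n}$, i.e. a generalized power sum in $n$ with fixed ratios — and generalized power sums satisfy linear constant-coefficient recursions over $\BQ(m_1,l_1)$ (after clearing the $z_j$ by a final elimination that preserves the recurrent structure, since recurrent sequences in a polynomial ring are closed under the relevant operations). Tracking that the final elimination of the $z_j$ does not destroy recurrence is the delicate step; one route is to eliminate the $z_j$ \emph{first} from the fixed part of the ideal, obtaining a fixed finite list of polynomials relating $m_1,l_1$ to $\mathcal{M}_2,\mathcal{L}_2$, and only then adjoin $\mathcal{L}_2^{\,n}=\mathcal{M}_2$ and take one further resultant in the two remaining variables $\mathcal{M}_2,\mathcal{L}_2$; that last resultant is manifestly a generalized power sum in $n$, giving the desired recursion and simultaneously the promised explicit algorithm.
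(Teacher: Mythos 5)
Your overall route---eliminate the shape parameters first so as to obtain \emph{fixed} algebraic relations among $m_1,l_1$ and the second-cusp eigenvalues $m_2,l_2$, then impose the filling relation $m_2=l_2^{\,n}$ and take a single resultant in $l_2$, observing that the result is a generalized power sum in $n$ and hence recurrent---is essentially the paper's proof, which gets the two relations $P(m_1,l_1,m_2)=0$ and $Q(m_1,l_1,l_2)=0$ from the fact that the geometric component $X$ of the $\SL(2,\BC)$ character variety is a surface, and sets $R_n=\mathrm{Res}_{l_2}\bigl(P(m_1,l_1,l_2^{\,n}),\,Q(m_1,l_1,l_2)\bigr)$. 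However, your justification of the divisibility statement has a genuine gap: you argue only that the discrete faithful (geometric) \emph{point} of $M_n$ lies on the gluing variety and hence ``survives the elimination.'' Vanishing of $R_n$ at one point of the curve cut out by $A_n$ does not give divisibility by $A_n$; what is needed is that the entire geometric \emph{component} $X_n$ of the character variety of the filled manifold lies in $X\cap\{m_2l_2^{-n}=1\}$. That is exactly what the paper uses: since $P(m_1,l_1,l_2^{\,n})$ and $Q(m_1,l_1,l_2)$ vanish identically on the irreducible curve $X_n$ and the resultant lies in the ideal they generate in $\BQ(m_1,l_1)[l_2]$, $R_n$ vanishes on all of $X_n$, whence $A_n$ divides its numerator. (There is also a triangulation-specific version of the same issue: the gluing variety of a fixed ideal triangulation need not carry the whole component $X_n$; the paper sidesteps this by working with the character variety itself and using the triangulation only for the ``explicitly computable by elimination'' clause.)

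A second, smaller omission: the resultant construction needs $P$ and $Q$ to have strictly positive degree in $m_2$ and $l_2$, which can fail when the two cusps are strongly geometrically isolated (filling $C_2$ then does not move the shape of $C_1$). The paper splits this case off at the start, noting that there $A_n$ is constant for all but finitely many $n$, so the claim is immediate; with that case removed, $d_P,d_Q>0$ and the resultant argument proceeds. Your concern about whether eliminating many variables preserves recurrence, and your fix of postponing the unique $n$-dependent equation to one final resultant, matches the structure of the paper's argument; the recurrence then follows, as you indicate, from the product formula for the resultant over the roots of $Q$, which exhibits $R_n$ as a finite $\BQ(m_1,l_1)$-combination of $n$-th powers of algebraic functions.
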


\subsection{The behavior of Quantum Invariants under filling}
\lbl{sub.appqt}

In Section \ref{sub.filling} we showed how holonomic sequences arise
in 1-parameter families of character varieties. In this section, we
will show how they arise in Quantum Topology.
Consider two endomorphisms $A,B$ of a finite dimensional vector space
$V$ over the field $\BQ(q)$. 
Let $\tr(D)$
denote the {\em trace} of an endomorphism $D$. The next lemma is
an elementary application of the {\em Cayley-Hamilton} theorem.

\begin{lemma}
\lbl{lem.AB}
With the above assumptions, the sequence $\tr(AB^n) \in \BQ(q)$ is
recurrent.
\end{lemma}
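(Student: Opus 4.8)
The plan is to use the Cayley–Hamilton theorem applied to the endomorphism $B$. Let $p(t) = t^d + c_{d-1}t^{d-1} + \dots + c_1 t + c_0 \in \BQ(q)[t]$ be the characteristic polynomial of $B$, where $d = \dim V$; since $B$ acts on a finite dimensional space over the field $\BQ(q)$, this polynomial has coefficients in $\BQ(q)$. By Cayley–Hamilton, $p(B) = 0$ as an endomorphism of $V$, that is,
\begin{equation}
\lbl{eq.CH}
B^d + c_{d-1} B^{d-1} + \dots + c_1 B + c_0 I = 0 \in \End(V).
\end{equation}

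From here the argument is purely formal. First I would multiply \eqref{eq.CH} on the left by $A$ and on the right by $B^n$ for an arbitrary integer $n$ (if we want all integers $n$ rather than just natural numbers, we should note that $B$ is invertible exactly when $c_0 \neq 0$; in general one restricts to $n \in \BN$, which is enough for the applications, or one passes to $\tr(AB^{n}B^{d})$ forms that avoid negative powers). This yields $A B^{n+d} + c_{d-1} A B^{n+d-1} + \dots + c_1 A B^{n+1} + c_0 A B^n = 0$ in $\End(V)$. Next I would apply the trace functional $\tr \colon \End(V) \to \BQ(q)$, which is $\BQ(q)$-linear, to this identity. Linearity of the trace turns the relation into
\begin{equation}
\lbl{eq.trrec}
\tr(AB^{n+d}) + c_{d-1}\tr(AB^{n+d-1}) + \dots + c_1 \tr(AB^{n+1}) + c_0 \tr(AB^{n}) = 0,
\end{equation}
valid for all $n$ in the chosen index set. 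This is exactly a linear recursion with constant coefficients $c_k \in \BQ(q)$ (independent of $n$) of the form required by Definition \ref{def.holo}, with leading coefficient $1 \neq 0$. Hence $\tr(AB^n)$ is recurrent.

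There is essentially no obstacle here: the only points requiring a word of care are (i) confirming that the characteristic polynomial of $B$ genuinely has coefficients in the ground field $\BQ(q)$ — which is automatic since $V$ is finite dimensional over $\BQ(q)$ and $B$ is represented by a matrix over $\BQ(q)$ — and (ii) the indexing convention: Definition \ref{def.holo} asks for a recursion valid for \emph{all integers} $n$, so to get the two-sided statement one needs $B \in \GL(V)$, whereas for $n \in \BN$ no invertibility is needed. Since the downstream applications only use the sequence for $n \gg 0$, either reading is harmless, but I would state explicitly which one is meant. The mild ``cleverness'' is simply the idea of sandwiching the Cayley–Hamilton relation between $A$ on the left and $B^n$ on the right before taking traces; everything else is bookkeeping.
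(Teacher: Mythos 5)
Your proof is correct and is exactly the argument the paper intends: the lemma is stated there as ``an elementary application of the Cayley--Hamilton theorem,'' and your route (apply Cayley--Hamilton to $B$, sandwich the relation between $A$ and $B^n$, take traces) is that application, carried out in full. Your remark about invertibility of $B$ versus restricting to $n\in\BN$ is a sensible clarification of the indexing convention and does not change the substance.
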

We need to recall the relevant Quantum Invariants of links from 
\cite{Jo,Jn,Tu}. Fix a simple Lie algebra $\fg$, a representation $V$ of
$\fg$, a knot $K$, and consider the {\em Quantum Group invariant} 
$Z^{\fg}_{V,K}(q)  \in \BZ[q^{\pm 1}]$. For instance,
\begin{itemize}
\item
When $\fg=\mathfrak{sl}_2$, and $V=\BC^2$ is the defining representation,
$Z^{\fg}_{V,K}(q)$ is the Jones polynomial of $K$. 
\item
When $\fg=\mathfrak{gl}(1|1)$ and $V=\BC^2$, $Z^{\fg}_{V,K}(q)$ is the Alexander
polynomial of $K$.
\end{itemize}
The quantum group invariant $Z^{\fg}_{V,K}(q)$ can be computed as the trace
of an operator associated to a braid presentation of $K$.
  
Let $L$ denote a 2-component link in $S^3$ with one unknotted component
$C_2$, and let $K_n$ denote the knot obtained by $-1/n$ filling on $C_2$.
Since $S^3\setminus C_2$ is a solid torus $S^1 \times D^2$ and $L$ is a knot in 
$S^1 \times D^2$ it follows that $L$ is the closure of an $(r,r)$ tangle 
$\a$. If $\b$ is a braid representative of a full positive twist
in the braid group of $r$ stands, then it follows that $K_n$ is obtained
by the closure of the tangle $\a\b^n$. 
If $A$ and $B$ denote the endomorphism of 
$V$ corresponding to $\a$ and $\b$ 
and $Q(n)$ denotes half of the negative
of the writhe of $\a\b^n$ multiplied by the quadratic Casimir value of $\fg$
in $V$, then we have:
$$
Z_{V,K_n}(q)=q^{-Q(n)}\tr(A B^n)
$$
Notice that $Q(n)$ is a quadratic function of $n$, and its presence
in the above formula is required if we insist that $K_n$ is a zero-framed
knot. The next theorem follows from the above discussion, Lemma \ref{lem.AB}
and Theorem \ref{thm.3}.

\begin{theorem}
\lbl{thm.2}
Fix a simple Lie algebra $\fg$ and a representation $V$ of $\fg$.
With the above assumptions, the sequence $q^{Q(n)}Z^{\fg}_{V,K_n}(q) \in \BQ(q)$ 
is recurrent. 
\end{theorem}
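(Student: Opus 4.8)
The plan is to reduce the assertion directly to Lemma~\ref{lem.AB}, the real work being the identity $Z^{\fg}_{V,K_n}(q)=q^{-Q(n)}\tr(AB^n)$ recalled above. Granting it, $q^{Q(n)}Z^{\fg}_{V,K_n}(q)=\tr(AB^n)$, and Lemma~\ref{lem.AB} --- the Cayley--Hamilton identity applied to the fixed endomorphism $B$ --- says precisely that $n\mapsto\tr(AB^n)$ is recurrent; this also makes the recursion explicit, of order at most $\dim W=(\dim V)^r$, with coefficients the signed coefficients of the characteristic polynomial of $B$. So the substance of the proof is to establish that identity with $A$ and $B$ honest endomorphisms of one fixed finite-dimensional vector space, independent of $n$.

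To do that I would first make precise the topological reduction sketched in the text. Since $C_2$ is unknotted, $S^3\setminus C_2$ is a solid torus $S^1\times D^2$; the component $L$ is isotopic there to the closure of an $(r,r)$-tangle $\a$; and $-1/n$ filling on $C_2$ reglues the solid torus by $n$ Dehn twists, which replaces $\a$ by $\a\b^n$, with $\b$ the $(r,r)$-tangle given by a full positive twist on $r$ strands. Closing $\a\b^n$ in $S^3$ gives a framed diagram of $K_n$ whose framing differs from the zero framing of $K_n$ by the quadratic polynomial $Q(n)$ of the statement. Then I would invoke functoriality of the Reshetikhin--Turaev operator invariant colored by $V$: it is a functor from the category of framed oriented $(r,r)$-tangles to $\End$ of a fixed finite-dimensional $\BQ(q)$-space $W$ (a tensor product of $r$ factors, each $V$ or its dual, dictated by the strand orientations), under which $\a\mapsto A$ and $\b\mapsto B$ in $\End(W)$, stacking of tangles goes to composition of operators so $\a\b^n\mapsto AB^n$, and trace closure goes to the categorical quantum trace. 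Absorbing the ribbon element into $A$ and inserting the framing factor $q^{-Q(n)}$ yields $Z^{\fg}_{V,K_n}(q)=q^{-Q(n)}\tr(AB^n)$ with $\tr$ the ordinary trace on $\End(W)$, whereupon Lemma~\ref{lem.AB} finishes the proof. (If one wants in addition the behaviour of the $q$-degrees of $Z^{\fg}_{V,K_n}$ as $n$ varies --- that is, its Newton polytope --- this is where Theorem~\ref{thm.3} enters.)

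I expect the main obstacle to be bookkeeping rather than anything conceptual. One has to (i) fix $r$ and the orientations of the strands of $\a$, and identify the trace closure with an honest trace on $\End(W)$ through the pivotal and duality data; (ii) verify that the framing correction is exactly the quadratic $Q(n)$ of the statement, tracking the contribution of the $n$-fold twist region and of the quadratic Casimir normalization; and (iii) handle the fact that the $R$-matrix entries a priori lie in $\BQ(q)$ adjoined a fixed fractional power of $q$ --- since $Z^{\fg}_{V,K_n}\in\BZ[q^{\pm1}]$ and the framing factor is a single fractional power of $q$, one works over that fixed finite extension of $\BQ(q)$ throughout (or rescales the variable), and the Cayley--Hamilton recursion holds there as stated. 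Once $Z^{\fg}_{V,K_n}=q^{-Q(n)}\tr(AB^n)$ is in hand with $A,B$ fixed, nothing further is needed.
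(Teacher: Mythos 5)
Your proposal is correct and follows essentially the same route as the paper: the paper's proof of Theorem \ref{thm.2} consists precisely of the preceding discussion (writing $K_n$ as the closure of $\a\b^n$, with the framing correction $q^{-Q(n)}$, so that $q^{Q(n)}Z^{\fg}_{V,K_n}(q)=\tr(AB^n)$ for fixed endomorphisms $A,B$ coming from the Reshetikhin--Turaev operator invariant) combined with Lemma \ref{lem.AB}, i.e.\ Cayley--Hamilton applied to $B$. Your additional bookkeeping about the trace closure, the quadratic framing term $Q(n)$, and fractional powers of $q$ only makes explicit what the paper leaves implicit.
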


\subsection{The sequence of Newton polygons of a 
recurrent sequence of polynomials}
\lbl{sub.newton}

Often one is interested in the Newton polygon of the $A$-polynomial;
for instance the slopes of its sides are boundary slopes of incompressible
surfaces as follows by Culler-Shalen theory; see \cite{CS1,CS2}.
This is one motivation of our next result. Recall that a {\em quasi-polynomial}
$p(n)=\sum_{j=0}^d a_j(n) n^j$ is a polynomial in one variable $n$ 
with coefficients $a_j(n)$ periodic functions of $n$. We will call $p(n)$
quasi-linear (resp. quasi-quadratic) if $d \leq 1$ (resp., $d \leq 2$).
Quasi-polynomials appear in Enumerative Combinatorics (see \cite{St}), 
and also in the lattice point counting problems via Ehrhart's theorem 
\cite{Eh}. Quasi-polynomials have appeared recently in the work of 
Calegari-Walker (see \cite{CW}) and also in Quantum Topology in relation
to the Slope Conjecture; see \cite{Ga3,Ga4}.

\begin{definition}
We say that a sequence $N_n$ of polygons is linear 
(resp. quasi-linear) if the coordinates of the vertices of $N_n$ are
polynomials (resp. quasi-polynomials) of degree at most one. Likewise,
we say that a sequence  $N_n$ of polygons is quadratic 
(resp. quasi-quadratic) if the coordinates of the vertices of $N_n$ are
polynomials (resp. quasi-polynomials) of degree at most two.
\end{definition}
The next theorem is of independent interest and explains the first
part of the title of the paper. Its proof follows from first principles of 
Polyhedral and Tropical Geometry, and the {\em Lech-Mahler-Skolem theorem} of
Analytic Number Theory.

\begin{theorem}
\lbl{thm.3}
Let $N_n$ be sequence of Newton polygons of a holonomic sequence $R_n \in
\BQ[x_1^{\pm 1},\dots,x_r^{\pm 1}]\setminus\{0\}$.
Then, for all but finite many integers $n$,
$N_n$ is quasi-linear. 
\end{theorem}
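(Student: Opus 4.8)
The plan is to reduce the statement about Newton polygons to a statement about individual exponent vectors, and then control each such vector via the structure theory of recurrent sequences. First I would write $R_n = \sum_{\alpha} c_\alpha(n)\, x^\alpha$, where the sum runs over a fixed finite set of exponent vectors $\alpha \in \BZ^r$ (finite because a holonomic sequence of Laurent polynomials lives in a finitely generated module over the coefficient ring, so only boundedly many monomials ever occur) and each coefficient $c_\alpha(n) \in \BQ$ is itself a holonomic — hence recurrent — scalar sequence obtained by extracting the $x^\alpha$-coefficient of $R_n$. The Newton polygon $N_n$ is the convex hull of $\{\alpha : c_\alpha(n)\neq 0\}$, so I need two things: (i) a clean description of the \emph{support} $S_n = \{\alpha : c_\alpha(n)\neq 0\}$ as a function of $n$, and (ii) a way to read off the vertices of $\mathrm{conv}(S_n)$ and show their coordinates are eventually quasi-linear in $n$.

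For (i), I would invoke the Lech--Mahler--Skolem theorem: for each fixed $\alpha$, the zero set $\{n : c_\alpha(n) = 0\}$ is a finite union of a finite set and finitely many full arithmetic progressions. Intersecting and complementing over the finitely many $\alpha$'s, the index set $\BZ$ decomposes, outside a finite set, into finitely many arithmetic progressions on each of which the support $S_n$ is \emph{constant} — call the value $S$ on a given progression $\{n \equiv a \pmod m\}$. So it suffices to prove that for a \emph{fixed} support set $S$ the vertices of $\mathrm{conv}(S)$ move quasi-linearly; but if $S$ is literally constant along the progression, then $\mathrm{conv}(S)$ is a \emph{constant} polygon, and constant is a (degenerate) special case of quasi-linear. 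Patching the finitely many progressions together (different progressions can give different constant polygons, but this is exactly what "quasi" allows — the vertex coordinates are periodic, hence quasi-linear of degree zero) yields the theorem. The finite exceptional set absorbed along the way is harmless since the statement is only claimed "for all but finitely many $n$."

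Let me reexamine where the content actually lies, because the sketch above proves something stronger and slightly suspicious — it would give quasi-\emph{constant}, not merely quasi-linear. The subtlety I am glossing over is that the exponent set need \emph{not} be finite: the recurrence has coefficients $c_k \in \BQ$ that are constants, so $R_n$ satisfies $\sum_k c_k R_{n+k} = 0$ with $c_k$ \emph{independent of the polynomial variables}, and in that rigid case the supports truly are eventually periodic and the theorem reduces as above. However, in the intended applications (Theorem~\ref{thm.1}, Theorem~\ref{thm.2}) one genuinely has a \emph{holonomic} rather than constant-coefficient recurrence — the $c_k$ may be polynomials in $n$ and in the $x_i$ — and then monomials can drift: a factor like $q^{Q(n)}$ with $Q$ quadratic shifts exponents by a quadratic amount before one renormalizes. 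So the real proof must allow the exponent vectors themselves to vary with $n$. The correct framework is: express $R_n$ in a basis of the solution module, so that $R_n = \sum_i p_i(n)\, \rho_i^n\, m_i(x)$ type of generalized-power-sum expansion; each $x_j$-exponent appearing is then governed by a generalized power sum in $n$, and by Lech--Mahler--Skolem combined with the standard polyhedral fact that the vertex of a convex hull is selected by a linear functional, one shows each vertex coordinate is eventually a quasi-polynomial of degree $\le 1$.

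The main obstacle, then, is precisely this tracking of exponent drift: one must show that although infinitely many distinct monomials may occur across all $n$, for each \emph{fixed residue class} $n \equiv a \pmod m$ (the moduli coming from Lech--Mahler--Skolem) the exponent vectors realizing the vertices of $N_n$ are affine-linear in $n$, with no genuine quadratic term surviving. I expect this to hinge on a tropical/valuation argument: pass to the field $\BQ((t))$ by specializing the $x_i$ to suitable monomials $t^{w_i}$, so that the leading $t$-exponent of $R_n$ evaluated there equals the minimum (or maximum) of $\langle w, \alpha\rangle$ over the support, i.e. the value of a tropical linear functional on $N_n$; that minimum is then, by Lech--Mahler--Skolem applied to the valuation of a recurrent sequence over $\BQ((t))$, a quasi-linear function of $n$. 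Ranging over the finitely many $w$ that cut out the various faces of the (bounded-complexity) polygons $N_n$ reconstructs all the vertices and shows their coordinates are quasi-linear. The delicate point — and where I would spend the most care — is ruling out that a valuation of a recurrent sequence grows quadratically: this is false in general for holonomic sequences (degrees can grow quadratically), so one must use that the renormalized sequence $R_n$ of Theorem~\ref{thm.3} is genuinely \emph{recurrent} (constant-coefficient), for which the valuation of a generalized power sum $\sum p_i(n)\rho_i^n$ is eventually affine in $n$ on each residue class — exactly the Lech--Mahler--Skolem-type input the paper advertises.
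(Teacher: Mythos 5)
Your final plan (the third paragraph) is in essence the paper's own strategy: specialize $x_i\mapsto t^{w_i}$, show that the $t$-adic valuation of the resulting recurrent sequence of one-variable Laurent polynomials is quasi-linear in $n$, and then recover $N_n$ from its support function in finitely many directions. But as written the proposal has genuine gaps. First, the opening reduction is false in the relevant setting: ``constant coefficient'' here means the recursion coefficients $c_k$ are independent of $n$ but lie in $\BQ[x_1^{\pm 1},\dots,x_r^{\pm 1}]$ (or $\BQ(x_1,\dots,x_r)$), so the union of the supports of the $R_n$ is typically infinite --- e.g.\ $R_{n+1}=xR_n$ gives $R_n=x^n$ --- and your corrected dichotomy in the second paragraph (either $c_k\in\BQ$, giving eventually periodic supports, or coefficients depending on $n$) skips exactly the case the theorem addresses: coefficients constant in $n$ but involving the $x_i$, which is precisely what produces the linear (neither zero nor quadratic) drift of the polygons.

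Second, the two steps you flag but do not supply are where all the work lies. (i) Quasi-linearity of the valuation is not a direct application of Lech--Mahler--Skolem. One writes $R_n(t)=\sum_j P_j(t,n)\lambda_j(t)^n$ with the $\lambda_j$ Puiseux series, groups the roots by their valuations $\w_1<\dots<\w_s$, and expands to get $R_n(t)=\sum_{(i,k)}a_{i,k}(n)\,t^{n\w_i+k}$, where each $a_{i,k}(n)$ is a generalized power sum over a number field whose roots lie in one fixed finite set. Plain LMS still permits each candidate leading coefficient to vanish along infinite arithmetic progressions, and since there are infinitely many pairs $(i,k)$ the selection of the leading term could cascade indefinitely; the paper avoids this by choosing a single modulus $M$ (possible because the root set is fixed, independent of $(i,k)$) so that every subsequence $a_{i,k}(nM+r)$ is identically zero or non-degenerate, and then invoking the finiteness of zeros of non-degenerate recurrences over number fields. (ii) Your parenthetical ``bounded-complexity'' for the $N_n$ is itself something to prove: the paper tropicalizes the characteristic polynomial to produce a fixed fan on which $\w\mapsto v^*(\lambda_j(\e_\w(x)))$ is linear on each maximal cone, concludes that the facet normals of all $N_n$ lie in one finite set of rays, and only then does evaluating the support function at finitely many generic directions, combined with the Minkowski reconstruction theorem, yield quasi-linear vertex coordinates. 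Without these two inputs the proposal is a correct outline of the paper's argument rather than a proof.
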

The next corollary of Theorem \ref{thm.3} follows from some recent
results of Chen-Li-Sam which generalize the Ehrhart theory; see \cite{CLS}.

\begin{corollary}
\lbl{cor.2}
Under the hypothesis of Theorem \ref{thm.3}, the volume and the number
of lattice points of $R_n$ is a quasi-polynomial function of $n$.
\end{corollary}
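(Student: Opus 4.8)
The plan is to combine Theorem \ref{thm.3} with the generalized Ehrhart theory of Chen--Li--Sam \cite{CLS}, reducing everything to a single residue class. Theorem \ref{thm.3} tells us that, outside a finite set of integers, the Newton polytope $N_n\subset\BR^r$ of $R_n$ is quasi-linear: there are finitely many vector-valued quasi-polynomials $v_1,\dots,v_K$ of degree $\le 1$ and a common period $M$ so that, for $n$ large, $N_n=\mathrm{conv}\{v_1(n),\dots,v_K(n)\}$ and so that on each residue class $n\equiv j\pmod M$ every coordinate of every $v_i$ is an ordinary affine-linear function of $n$ (indeed with integer coefficients, since $N_n$ is a lattice polytope). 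Writing $n=Mm+j$ and $w_i(m):=v_i(Mm+j)$, the problem becomes: show that $m\mapsto|N_{Mm+j}\cap\BZ^r|$ and $m\mapsto\mathrm{vol}(N_{Mm+j})$ are, for $m\gg 0$, quasi-polynomials in $m$, where $N_{Mm+j}=\mathrm{conv}\{w_1(m),\dots,w_K(m)\}$ with the $w_i$ affine-linear.

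First I would nail down the reduction to a \emph{uniform} family of vertex functions. Theorem \ref{thm.3} controls the coordinates of the vertices, but I still need that, on each residue class and for $m$ large, the face lattice of $N_n$ --- in particular which of the $w_i(m)$ are actual vertices --- is constant, so that a fixed list and a fixed triangulation work for all large $m$. This holds because all the relevant combinatorial data is determined by the signs of the maximal minors of the matrix whose rows are the homogenized coordinates $(1,w_i(m))\in\BR^{r+1}$; each such minor is a polynomial in $m$, hence of eventually constant sign, so the oriented matroid of the configuration $\{w_1(m),\dots,w_K(m)\}$ --- and therefore the combinatorial type of $N_n$ --- stabilizes on the class. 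After this step I may assume $N_{Mm+j}=\mathrm{conv}\{w_1(m),\dots,w_{k_j}(m)\}$ with $k_j$ fixed, with all $w_i$ vertices, and with a fixed triangulation of $\{1,\dots,k_j\}$ compatible with $N_n$.

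Next, for the lattice-point count I would invoke \cite{CLS}: the number of lattice points of a polytope presented as the convex hull of finitely many points whose coordinates are polynomial functions of a parameter $m$ agrees, for $m\gg 0$, with a quasi-polynomial in $m$. Applied to $N_{Mm+j}$ this gives that $n\mapsto|N_n\cap\BZ^r|$, restricted to $n\equiv j\pmod M$, is eventually a quasi-polynomial in $n$; running over the finitely many classes $j$ and replacing $M$ by the least common multiple of $M$ with the periods produced, $|N_n\cap\BZ^r|$ equals a quasi-polynomial in $n$ for all but finitely many $n$. (Should \cite{CLS} be stated only in a full-dimensional or simplicial form, I would triangulate $\{1,\dots,k_j\}$ and apply inclusion--exclusion over the faces of $N_n$; each face is itself the convex hull of a subset of the $w_i(m)$, so the reduction is to the same problem in lower dimension and closes by induction on $r$.) For the volume, once the combinatorial type has stabilized the fixed triangulation writes $\mathrm{vol}(N_n)$ as a finite sum of simplex volumes $\tfrac1{r!}\bigl|\det\bigl(w_{i_1}(m)-w_{i_0}(m),\dots,w_{i_r}(m)-w_{i_0}(m)\bigr)\bigr|$, each a polynomial in $m$ of degree $\le r$, so $\mathrm{vol}(N_n)$ is polynomial on the class and hence quasi-polynomial in $n$ for $|n|$ large; alternatively it is the leading $t$-coefficient of the generalized Ehrhart quasi-polynomial of the previous paragraph.

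I expect the only real work to be the stabilization step of the second paragraph together with checking that the output of Theorem \ref{thm.3} is literally in the shape demanded by \cite{CLS} --- that is, a genuine family of polytopes presented by polynomial (here affine-linear) vertex functions on each residue class, with no oscillation of which points are vertices. Once that presentation is in hand, both conclusions are essentially black boxes: \cite{CLS} for the lattice-point count, elementary linear algebra for the volume, and routine bookkeeping with least common multiples of periods to pass from one residue class to all of $\BZ$. The regime $n\to-\infty$ is handled identically, Theorem \ref{thm.3} covering all but finitely many integers $n$.
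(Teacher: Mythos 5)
Your argument is correct and is essentially the proof the paper intends: the paper derives Corollary \ref{cor.2} directly from Theorem \ref{thm.3} together with the generalized Ehrhart theory of Chen--Li--Sam \cite{CLS}, exactly as you do. Your extra work (stabilizing the combinatorial type on each residue class and handling the volume by a fixed triangulation with polynomial determinants) simply fills in details the paper leaves to the citation, and it is sound.
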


\begin{remark}
\lbl{rem.Rnonzero}
The hypothesis that $R_n \neq 0$ for all $n$ in Theorem \ref{thm.3}
is not as strict as it seems (and is trivially satisfied in its application
to Theorems \ref{thm.1} and \ref{thm.2}). Indeed, the Skolem-Mahler-Lech 
theorem implies that the zero set of any holonomic sequence in a field of 
characteristic zero vanishes on a finite union of full arithmetic progressions
minus a finite set. See Theorem \ref{thm.MLS} below.
\end{remark}

\begin{remark}
\lbl{rem.finite}
The reader may have noticed that Theorems \ref{thm.1} and 
Theorem \ref{thm.3} are valid for all but finitely many integers $n$.
There are two independent sources for this exception. In Theorem \ref{thm.1}, 
the exception comes from the Hyperbolic Dehn filling theorem of Thurston 
(see \cite{Th} and also \cite{NZ}), which implies that all but finitely 
many fillings in one cusp of a hyperbolic 3-manifold gives a hyperbolic
manifold. The finite set of exceptional fillings are the focus of the 
problem of {\em exceptional Dehn surgery} of \cite{CGLS}. 
In Theorem \ref{thm.3}, the exceptions come from the 
Skolem-Mahler-Lech theorem, which states that the zeros of a sequence of
rational numbers that satisfies a constant coefficient recursion relation
are full sets of arithmetic progressions, up to a finite exceptional
set. Is there a connection between exceptional Dehn surgery and the
LMS theorem?
\end{remark}

\subsection{Application in character varieties}
\lbl{sub.appcv}

Theorems \ref{thm.1} and \ref{thm.3} are general, but in favorable
circumstances more is true. Namely, consider a family of knot complements 
$K_n$, obtained by $-1/n$ filling on a cusp of 2-component hyperbolic 
link $L$ in $S^3$, with linking number $f$. 
Let $A_n(m,l)$ denote the $A$-polynomial of $K_n$ with respect to
the canonical meridian and longitude $(\mu,\l)$ of $K_n$.

\begin{definition}
\lbl{def.favor}
We say that two component hyperbolic $L$ link in $S^3$ with linking number $f$ 
is {\em favorable} if $A_n(m,l m^{-f^2 n}) \in \BQ[m^{\pm 1},l^{\pm 1}]$ is
holonomic, for all but finitely many values of $n$.
\end{definition}
The shift $l \mapsto l m^{-f^2 n}$ in the above coefficients
is due to the fact that the canonical meridian-longitude pair of $K_n$
differs from the corresponding pair of the unfilled component of $L$
due to the nonzero linking number. 

\begin{remark}
\lbl{rem.quadshift}
If $N_n$ is a sequence of Newton polygons in $\BR^2$ (with coordinates
$(m,l)$ which is quasi-linear, then the sequence of polygons obtained by
applying the transformation $(m,l)\mapsto (m,l m^{-f^2 n})$ to $N_n$ is
quasi-quadratic. Indeed, there is a finite set $I$ and quasi-linear functions
$s_i(n), t_i(n)$ for $i \in I$ such that $N_n$ is the convex hull of 
$(s_i(n),t_i(n))$ for $i \in I$. The monomial transformation 
$(m,l)\mapsto (m,l m^{-f^2 n})$ is the linear map $(a,b) \mapsto (a-f^2b n,b)$
of $\BR^2$ which sends $(s_i(n),t_i(n))$ to $(s_i(n)-f^2 t_i(n)n,t_i(n))$.
Since $s_i$ and $t_i$ are quasi-linear, it follows that $s_i(n)-f^2 t_i(n)n$
and $t_i(n)$ are quasi-quadratic. 
\end{remark}

Our next corollary gives a natural source of sequences of quasi-quadratic 
polytopes that come from classical topology, i.e., character varieties of knot 
complements. As discussed in \cite{CW}, this class of polytopes is a natural
generalization of the sequence $nP$ of scalings of a fixed rational
polytope $P$.

\begin{corollary}
\lbl{cor.3}
If a two component link $L$ is favorable, then for all but finitely
many $n$, the Newton polygon of $A_n(m,l)$ is quasi-quadratic.
\end{corollary}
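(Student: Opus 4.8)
The plan is to combine the definition of a favorable link with Theorem~\ref{thm.3} and the elementary change-of-coordinates computation already recorded in Remark~\ref{rem.quadshift}. Set $R_n(m,l) = A_n(m, l\, m^{-f^2 n})$. Since $L$ is favorable, Definition~\ref{def.favor} ensures that, for all but finitely many $n$, $R_n \in \BQ[m^{\pm 1}, l^{\pm 1}]$ and the sequence $(R_n)$ is holonomic. Moreover $R_n$ is obtained from the $A$-polynomial $A_n$ by the \emph{invertible} monomial substitution $l \mapsto l\, m^{-f^2 n}$, which does not affect whether a Laurent polynomial vanishes; as $A_n \neq 0$ (it is the $A$-polynomial of the hyperbolic knot $K_n$, which exists for all large $n$ by Thurston's hyperbolic Dehn filling theorem), we get $R_n \neq 0$ for all but finitely many $n$. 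Thus $(R_n)$ meets the hypotheses of Theorem~\ref{thm.3}, and we conclude that the Newton polygon $N'_n$ of $R_n$ is quasi-linear for all but finitely many $n$.

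Next I would transport this conclusion back to $A_n$. On exponents, the substitution $l \mapsto l\, m^{-f^2 n}$ is the invertible linear map $\phi_n \colon \BR^2 \to \BR^2$, $\phi_n(a,b) = (a - f^2 n\, b,\, b)$, so $N'_n = \phi_n(N_n)$, where $N_n$ denotes the Newton polygon of $A_n$; hence $N_n = \phi_n^{-1}(N'_n)$ with $\phi_n^{-1}(a,b) = (a + f^2 n\, b,\, b)$. Now write $N'_n$ as the convex hull of points $(s_i(n), t_i(n))$, $i \in I$, for a finite set $I$ and quasi-linear functions $s_i, t_i$, which is possible because $N'_n$ is quasi-linear. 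Since a linear map sends the convex hull of a point set to the convex hull of its images, $N_n$ is the convex hull of the points $\phi_n^{-1}(s_i(n), t_i(n)) = (s_i(n) + f^2 n\, t_i(n),\ t_i(n))$. Each coordinate here is a quasi-quadratic function of $n$ (the first is the sum of a quasi-linear function and $n$ times a quasi-linear function), exactly as in the computation of Remark~\ref{rem.quadshift} but with the sign of the shift reversed. Therefore $N_n$ is quasi-quadratic for all but finitely many $n$, which is the assertion, the relevant exceptional set being the union of the one from favorability and the one from Theorem~\ref{thm.3}.

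There is essentially no hard step here: the combinatorial content is entirely carried by Theorem~\ref{thm.3}, and the passage from the quasi-linear polygon $N'_n$ to the quasi-quadratic polygon $N_n$ is the elementary observation of Remark~\ref{rem.quadshift}. The only points that need a little care are the bookkeeping of the two finite exceptional sets, and the verification that Theorem~\ref{thm.3} genuinely applies to $(R_n)$ — that is, that $R_n$ is a \emph{nonzero} Laurent polynomial for all large $n$ (the nonvanishing because a monomial change of variables is invertible, the Laurent-polynomiality being part of the favorability hypothesis) and that $(R_n)$ satisfies one fixed recursion valid for all large $n$ (again part of favorability).
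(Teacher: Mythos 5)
Your proof is correct and is essentially the argument the paper intends: favorability plus Theorem~\ref{thm.3} gives quasi-linear Newton polygons for $A_n(m,l\,m^{-f^2 n})$, and the inverse monomial shift turns these into quasi-quadratic polygons exactly as in Remark~\ref{rem.quadshift}. Your extra care about the nonvanishing of $R_n$ (via hyperbolicity of $K_n$ for large $n$) and the bookkeeping of the two finite exceptional sets only makes explicit what the paper leaves implicit (cf.\ Remark~\ref{rem.Rnonzero}).
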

Favorable links are more common than one might think.
Let us show three favorable links
$$ 
\psdraw{mywhitehead}{1.5in} \qquad
\psdraw{mywhitehead2}{1.5in} \qquad
\psdraw{mypretzel}{1.3in}
$$
the {\em Whitehead link} (in the left), the {\em twisted Whitehead link} 
(in the middle) and the pretzel link (in the right). The first two links
were shown to be favorable by Hoste-Shanahan (see \cite{HS}) and surgeries
gives rise to two families of twist knots. The last link was shown to
be favorable in \cite{GM} and surgeries gives rise to the family of 
$(-2,3,3+2n)$ pretzel knots.

\subsection{Acknowledgment}
The author wishes to thank N. Dunfield, T.T.Q. Le, T. Mattman and J. Yu 
for useful conversations.

\section{Proof of Theorem \ref{thm.1}}
\lbl{sec.thm1}

Fix an oriented hyperbolic 3-manifold $M$ with two cusps 
$C_1$ and $C_2$ and choice of meridian-longitude $(\mu_i,\l_i)$ on each cusp 
for $i=1,2$. Let $K_n$ denote the result of $-1/n$ filling on $C_2$.
We consider two cases: $M$ has strongly geometrically isolated cusps,
or not. For a definition of strong geometric isolation, see \cite{NR} 
and also \cite{Ca,CW}. When $M$ is strongly geometrically isolated, Dehn
filling on one cusp does not change the shape of the other. This implies
that $A_n$ is constant (and therefore, holonomic), for all but finitely 
many integers $n$. Thus, we may assume that $M$ does not have strongly
geometrically isolated cusps.

Let $X$ denote the geometric component of the $\SL(2,\BC)$ character variety
of $M$. The {\em hyperbolic Dehn filling} theorem of Thurston implies that $X$
is a complex affine surface; see \cite{Th} and also \cite{NZ}. 
So, the field $F$ of rational functions on $X$ 
has transendence degree 2. Now $X$ has four known nonconstant
rational functions: the eigenvalues of the meridians $m_1,m_2$ and the 
longitudes $l_1,l_2$ around each cusp. So, each triple $\{m_1,l_1,m_2\}$ and 
$\{m_1,l_1,l_2\}$ of elements of $F$ is polynomially dependent i.e., satisfies
a polynomial equation
\begin{equation}
\lbl{eq.PQ}
P(m_1,l_1,m_2)=0 \qquad Q(m_1,l_1,l_2)=0 
\end{equation}
where $P(m_1,l_1,m_2) \in \BQ(m_1,l_1)[m_2]$ and 
$Q(m_1,l_1,l_2) \in \BQ(m_1,l_1)[l_2]$ are polynomials of strictly positive
(by hypothesis) degrees $d_P$ and $d_Q$ with respect to 
$m_2$ and $l_2$. The geometric component $X_n$ of the character variety of $K$
is the intersection of $X$ with the Dehn-filling equation $m_2 l_2^{-n}=1$
\cite{Th}. 
So, on $X_n$ we have $P(m_1,l_1,l_2^n)=0$. 
Let $p(m_1,l_1)$ and $q(m_1,l_1)$ denote the 
coefficient of $m_2^{d_P}$ and $l_2^{d_Q}$ in $P(m_1,l_1,m_2)$
and $Q(m_1,l_1,l_2)$ respectively. Let $R_n(m_1,l_1) \in \BQ(m_1,l_1)$ 
denote the {\em resultant} of $P(m_1,l_1,l_2^n)$ and $Q(m_1,l_1,l_2)$ 
(both are elements of $\BQ(m_1,l_1)[l_2]$)
with respect to $l_2$; see \cite[Sec.IV.8]{La}. It follows that 
$$
R_n(m_1,l_1)= p(m_1,l_1)^{d_Q} \prod_{l_2: Q(m_1,l_1,l_2)=0} P(m_1,l_1,l_2^n)
\in \BQ(m_1,l_1)
$$
Since $R_n(m_1,l_1)$ is a $\BQ(m_1,l_1)$ linear combination of 
$P(m_1,l_1,l_2^n)$ and $Q(m_1,l_1,l_2)$ (see \cite[Sec.IV.8]{La}) and since
$P(m_1,l_1,l_2^n)$ and $Q(m_1,l_1,l_2)$ vanish on the irreducible curve
$X_n$, it follows that $A_n(m_1,l_1)$ divides the numerator of $R_n(m_1,l_1)$.
Moreover, by the above equation, $R_n(m_1,l_1)$ 
is a $\BQ(m_1,l_1)$-linear combination of the $n$-th powers of a 
finite set of elements $l_2$ algebraic over $\BQ(m_1,l_1)$. 
Section \ref{sub.thm2} below implies that 
$R_n$ satisfies a linear recursion with constant coefficients in 
$\BQ[m_1,l_1]$. This recursion is valid for all integers $n$ and concludes 
the proof of Theorem \ref{thm.1}.
\qed

\section{Proof of Theorem \ref{thm.3}}
\lbl{sec.thm2}

\subsection{The support function of a polytope}
\lbl{sub.support}

Let us review some standard facts of {\em Polyhedral Geometry}
regarding the {\em support function} $h_P$ of a convex body $P$ in $\BR^r$. 
For a detailed discussion, see \cite[Sec.1.7]{Sc}. The latter is defined by
$$
h_P: \BR^r\setminus\{0\} \longto \BR, \qquad h_P(u)=\sup\{u \cdot x \,|
x \in P\} 
$$
where $u \cdot v$ denotes the standard inner product of two vectors $u$
and $v$ of $\BR^r$. 
Given a unit vector $u$, there is a unique hyperplane with outer normal
vector $u$ that touches $P$, and entirely contains $P$ in the left-half space.
The value $h_P(u)$ of the support function is the signed distance from the 
origin to the above hyperplane. This is illustrated in the following figure:
$$
\psdraw{polygon}{1.5in}
$$
Let us list some useful properties of the support function:
\begin{itemize}
\item
$h_P$ uniquely determines the convex body $P$. This is the famous {\em
Minkowski reconstruction theorem}. For a detailed proof, see 
\cite[Thm.1.7.1]{Sc} and also \cite{Kl}.
\item
$h_P$ is homogeneous and subadditive.
\item
When $P$ is a convex polytope with vertex set $V_P$, then
\begin{equation}
\lbl{eq.hPmax}
h_P(u)=\max\{u \cdot v\, |v \in V_P\} 
\end{equation}
In particular, $h_P$ is a piece-wise linear function.
\item
The projection of $P$ to the line $\BR u$ is the line segment
$[-h_P(-u),h_P(u)]$. See the above figure for an illustration.
\end{itemize}
Now, consider a polynomial 
$$
R(x)=\sum_{\a \in \calA} c_{\a} x^{\a}
$$
in $r$ variables $(x_1,\dots,x_r)$ where $\calA \subset \BZ^r$ is a finite 
set of exponents, and $c_{\a} \neq 0$. As usual, we abbreviate 
$x^{\a}=\prod_i x_i^{\a_i}$. Let $P$ denote the Newton polytope of $R$.
If $\w \in \BQ^r\setminus\{0\}$, consider the
specialization of variables given by 
\begin{equation}
\lbl{eq.ew}
\e_{\w}(x_1,\dots,x_r)=(t^{\w_1},\dots,t^{\w_r}) 
\end{equation}
Consider the corresponding Laurent polynomial 
$$
R_{\w}(t)=R(\e_{\w}(x))= \sum_{\a \in \calA} c_{\a} t^{ \w \cdot \a}
$$
For generic weight $\w$, the Newton polygon of $R_{\w}(t)$ equals to the
projection of $P$ to the line $\BR \w$. It follows that for generic
$\w$, the Newton polygon of $R_{\w}(t)$ is given by $[-h_P(\w),h_P(\w)]$,
where $h_P$ is piece-wise linear function of $\w$.

\subsection{Generalized power sums and their zeros}
\lbl{sub.power}

Generalized power sums play a key role to the LMS theorem. For a detailed
discussion, see \cite{vP} and also \cite{EvPSW}. Recall that a
{\em generalized power sum} $a_n$ for $n=0,1,2,\dots$ is an expression
of the form
\begin{equation}
\lbl{eq.gps}
a_n=\sum_{i=1}^m A_i(n) \a_i^n
\end{equation}
with {\em roots} $\a_i$, $1 \leq i \leq m$, distinct nonzero quantities,
and coefficients $A_i(n)$ polynomials of degree $m_i-1$ for
positive integers $m_i$, $1 \leq i \leq m$. The generalized power sum $a_n$
is said to have {\em order}
$$
d=\sum_{i=1}^m m_i
$$
and satisfies a linear recursion with constant coefficients of the form
$$
a_{n+d}=s_1 a_{n+d-1} + \dots + s_d a_n
$$
where 
$$
s(x)=\prod_{i=1}^m (1-\a_i x)^{m_i}=1-s_1x-\dots s_d x^d.
$$
It is well-known that a sequence is {\em recurrent} i.e., satisfies a linear 
recursion with constant coefficients if and only if it is a generalized 
power sum. Observe that the monic polynomial polynomial $s(x)$ of smallest
possible degree is uniquely determined by $(a_n)$.

The LMS theorem concerns the zeros of a generalized power sum.

\begin{theorem}
\lbl{thm.MLS}\cite{Sk,Ma,Le}
The zero set of a generalized power sum is the union of a finite set and
a finite set of arithmetic progressions.
\end{theorem}

A detailed proof of this important theorem is discussed in \cite{vP}, for
recurrent sequences with values in an arbitrary field of characteristic
zero. In the next section we will need a slightly stronger form of the LMS
theorem. We say that a recurrent sequence $(a_n)$ is {\em non-degenerate} if
the ratio of two distinct roots of $(a_n)$ is not a root of unity; see 
\cite[Sec.1.1.9]{EvPSW}. 

The LMS theorem in the case of number fields follows from the following 
two theorems.

\begin{theorem}
\lbl{thm.deg1}
\cite[Thm.1.2]{EvPSW}
If $(a_n)$ is reccurrent sequence there exists $M \in \BN$ such that for 
every $r$ with $0 \leq r \leq M-1$, the subsequence $(a_{nM+r})$ is either 
zero or non-degenerate.
\end{theorem}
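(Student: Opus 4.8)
The plan is to reduce to the non-degenerate case by passing to arithmetic subsequences that split apart the ``bad'' pairs of roots. First I would write $(a_n)$ as a generalized power sum $a_n=\sum_{i=1}^m A_i(n)\alpha_i^n$ as in \eqref{eq.gps}, with the $\alpha_i$ distinct and nonzero and $\deg A_i=m_i-1$. By definition $(a_n)$ fails to be non-degenerate exactly when some ratio $\alpha_i/\alpha_j$ with $i\neq j$ is a root of unity; there are only finitely many such ratios, so I would let $M$ be the least common multiple of their multiplicative orders (and $M=1$ if no ratio is a root of unity, in which case $(a_n)$ is already non-degenerate). Note that this $M$ does not depend on any residue $r$. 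Its key feature is the equivalence
\begin{equation*}
\alpha_i^M=\alpha_j^M \iff (\alpha_i/\alpha_j)^M=1 \iff \alpha_i/\alpha_j \text{ is a root of unity}\qquad(i\neq j),
\end{equation*}
where the only nontrivial point is that the order of any root-of-unity ratio divides $M$ by construction.

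Next, for a fixed $r$ with $0\le r\le M-1$, I would rewrite
\begin{equation*}
a_{nM+r}=\sum_{i=1}^m\big(A_i(nM+r)\,\alpha_i^r\big)\,(\alpha_i^M)^n
\end{equation*}
and group the indices $i$ by the value of $\alpha_i^M$: letting $B$ be the finite set of these values,
\begin{equation*}
a_{nM+r}=\sum_{\beta\in B}C_\beta(n)\,\beta^n,\qquad C_\beta(n):=\sum_{i:\ \alpha_i^M=\beta}A_i(nM+r)\,\alpha_i^r .
\end{equation*}
Each $C_\beta$ is a polynomial in $n$, since $n\mapsto nM+r$ preserves polynomiality and $\alpha_i^r\neq0$. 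I would then discard every $\beta$ with $C_\beta\equiv 0$. If all of them vanish identically, then $a_{nM+r}\equiv0$ and the subsequence is zero; otherwise what survives is a genuine generalized power sum with distinct nonzero roots. To see it is non-degenerate, suppose $\beta=\alpha_i^M$ and $\beta'=\alpha_j^M$ are two surviving roots with $(\beta/\beta')^N=1$ for some $N\ge1$; then $(\alpha_i/\alpha_j)^{MN}=1$, so $\alpha_i/\alpha_j$ is a root of unity, hence $\alpha_i^M=\alpha_j^M$ by the displayed equivalence, contradicting $\beta\neq\beta'$. Thus no ratio of two distinct roots of $(a_{nM+r})$ is a root of unity, and since this argument used the same $M$ for every $r$, the proof is complete.

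The step I expect to be the main obstacle — really the only subtle point — is the bookkeeping when several original roots collapse to a common value $\alpha_i^M$: the combined coefficient $C_\beta$ may drop in degree, or vanish identically through cancellation, and it is precisely this phenomenon that forces the ``either zero or non-degenerate'' dichotomy rather than outright non-degeneracy. It is convenient to record that $\deg C_\beta\le\max_i(m_i-1)$, with equality whenever no collapse occurs at $\beta$, so that in the generic case the subsequence has the same order as $(a_n)$; the collapsing case is where one must argue with a little care. One should also read ``root of unity'' inside a fixed algebraic closure of the ground field, which is harmless in characteristic zero and in particular in the number field setting relevant here.
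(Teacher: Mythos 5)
Your argument is correct and complete: taking $M$ to be the least common multiple of the orders of the root-of-unity ratios $\alpha_i/\alpha_j$, regrouping $a_{nM+r}$ as a power sum in the roots $\alpha_i^M$, and discarding identically vanishing combined coefficients is exactly the standard proof of this statement. The paper itself offers no proof and simply cites \cite[Thm.1.2]{EvPSW}, where essentially this same reduction is carried out, so there is nothing to reconcile; your handling of the only delicate point (coefficients $C_\beta$ collapsing or cancelling, which is what produces the ``zero or non-degenerate'' dichotomy) is also the right one.
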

In fact, if $(a_n)$ takes values in a number field $K$,
there are absolute bounds for $M$ in terms of the degree of $K/\BQ$ and 
the order of $(a_n)$.

\begin{theorem}
\lbl{thm.deg2}
\cite[Cor.1.20]{EvPSW}
If $(a_n)$ is non-degenerate recurrent sequence with values in a number field
$K$, then it has finitely many zero terms.
\end{theorem}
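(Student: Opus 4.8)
The statement is the non-degenerate case of the Skolem--Mahler--Lech theorem, and the plan is to run the classical $\mathfrak p$-adic interpolation argument and close it off with Strassmann's theorem on the zeros of $p$-adic power series. First I would normalize the setup: after enlarging the number field $K$ if necessary, I may assume that the roots $\alpha_1,\dots,\alpha_m$ of $(a_n)$ and all coefficients of the polynomials $A_i$ in the expansion $a_n=\sum_{i=1}^m A_i(n)\alpha_i^n$ lie in $K$. All but finitely many primes $\mathfrak p$ of $K$ are unramified over their residue characteristic, make every $\alpha_i$ a $\mathfrak p$-adic unit, and make every coefficient of every $A_i$ a $\mathfrak p$-adic integer; I would fix one such $\mathfrak p$ lying over an odd rational prime $p$, and write $K_{\mathfrak p}$ for the completion, $\calO_{\mathfrak p}$ for its valuation ring, $v_{\mathfrak p}$ for the normalized valuation, and $q$ for the size of the residue field.

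Next comes the interpolation. Since each $\alpha_i$ is a $\mathfrak p$-adic unit, I can choose a sufficiently divisible positive integer $N$ (for instance $N=(q-1)p^{k}$ with $k$ large) so that, for every $i$, $\alpha_i^N$ lies in the disc around $1$ on which $\log_{\mathfrak p}$ and $\exp_{\mathfrak p}$ are mutually inverse and on which $x\mapsto(\alpha_i^N)^x:=\exp_{\mathfrak p}(x\log_{\mathfrak p}(\alpha_i^N))$ defines a power series in $x$ converging for all $x\in\BZ_p$ and agreeing with the ordinary power when $x\in\BZ$. Splitting the index as $n=Nx+r$ with $0\le r\le N-1$, I would set
$$
f_r(x)=\sum_{i=1}^m A_i(Nx+r)\,\alpha_i^{r}\,\exp_{\mathfrak p}(x\log_{\mathfrak p}(\alpha_i^N)).
$$
This is a finite sum of polynomials times convergent exponentials, hence a power series in $x$ with coefficients in $K_{\mathfrak p}$ tending to $0$, so it converges and defines an analytic function on $\BZ_p$; moreover $f_r(n)=a_{Nn+r}$ for all $n\in\BZ_{\ge 0}$.

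Then I would invoke non-degeneracy to finish. If $i\ne j$ then $\alpha_i/\alpha_j$ is not a root of unity, so $(\alpha_i/\alpha_j)^N\ne1$, and hence the $\alpha_i^N$ are pairwise distinct; and $A_i(Nx+r)\alpha_i^{r}$ is a nonzero polynomial in $x$ of degree $m_i-1$, since composing the nonzero polynomial $A_i$ with the nonconstant linear map $x\mapsto Nx+r$ preserves its degree and multiplying by $\alpha_i^{r}\ne0$ keeps it nonzero. Thus $x\mapsto a_{Nx+r}$ is itself a genuine generalized power sum of order $d$, and in particular is not the zero function on $\BN$: a nonzero generalized power sum never vanishes identically, because the functions $x\mapsto x^j\beta^x$ with distinct nonzero $\beta$ and $j\ge0$ are linearly independent (equivalently, its generating series is a nonzero rational function). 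Hence $f_r$ is not the zero power series, so by Strassmann's theorem (a nonzero power series over a complete non-archimedean field, with coefficients tending to $0$, has only finitely many zeros in the closed unit ball; see \cite{vP,EvPSW}), $f_r$ has only finitely many zeros in $\BZ_p$, and in particular only finitely many in $\BZ_{\ge0}$. Therefore each of the $N$ progressions $\{Nn+r\}$ contains only finitely many $n$ with $a_n=0$, and summing over $r$ proves the theorem.

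\textbf{Main obstacle.} The logical skeleton is short; essentially all the work sits in the non-archimedean analysis. The hard part will be pinning down the exponent $N$ so that every $\alpha_i^N$ lands in the disc on which $\exp_{\mathfrak p}$ and $\log_{\mathfrak p}$ are mutually inverse analytic isomorphisms and on which $x\mapsto(\alpha_i^N)^x$ genuinely extends to a convergent power series on all of $\BZ_p$ (the prime $2$ needs a slightly more restrictive disc, whence the precaution of taking $p$ odd or padding $N$ by a power of $p$), together with checking that the resulting $f_r$ is honestly a convergent power series with coefficients tending to $0$ so that Strassmann applies; on the algebraic side, the small but load-bearing point is the lemma that a nonzero generalized power sum is not the zero function, which is exactly where non-degeneracy is used to exclude an entire subprogression of zeros.
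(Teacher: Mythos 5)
The paper never proves this statement: Theorem \ref{thm.deg2} is imported verbatim from \cite[Cor.1.20]{EvPSW}, so there is no internal proof to compare yours against. Your argument is correct, and it is essentially the classical proof from the cited literature (Skolem's method, as in \cite{vP}): $\mathfrak{p}$-adic interpolation of the subsequences $a_{Nn+r}$ by analytic functions $f_r$ on $\BZ_p$, followed by Strassmann's theorem, with non-degeneracy entering exactly where you place it --- it keeps the roots $\alpha_i^N$ pairwise distinct, so each subsequence $n\mapsto a_{Nn+r}$ is a genuine generalized power sum with nonzero polynomial coefficients, hence (by linear independence of $n\mapsto n^j\beta^n$) not identically zero, hence no $f_r$ is the zero series and no full arithmetic progression of zeros can occur; this is precisely the step that fails in the degenerate case and is why the general Skolem--Mahler--Lech statement (Theorem \ref{thm.MLS}) allows arithmetic progressions. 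The two points you flag are indeed the only delicate ones: choosing $\mathfrak{p}$ unramified, odd, and with all $\alpha_i$ units and all coefficients integral, and choosing $N$ so that $v_{\mathfrak{p}}(\log_{\mathfrak{p}}(\alpha_i^N))>1/(p-1)$, which makes $x\mapsto\exp_{\mathfrak{p}}(x\log_{\mathfrak{p}}(\alpha_i^N))$ a power series in $x$ with coefficients tending to $0$, so that $f_r$ is Strassmann-admissible and agrees with $a_{Nn+r}$ at integers. One caveat worth noting: your route gives only qualitative finiteness, whereas the remark following the theorem in the paper (a bound on the number of zeros depending only on $[K:\BQ]$ and the order, via \cite{ESS}) rests on much deeper quantitative machinery that Strassmann's theorem cannot deliver; but the statement as posed asks only for finiteness, which your proof establishes.
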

In fact, the number of zeros is bounded above by the degree of $K/\BQ$
and the order of $(a_n)$; see \cite[Eqn.1.18]{ESS}.

\subsection{Proof of Theorem \ref{thm.3}}
\lbl{sub.thm2}

Fix a holonomic sequence $R_n(x_1,\dots,x_r) \in \BQ[x_1^{\pm 1},\dots,
x_r^{\pm 1}]$ (non-zero, for all but finitely many $n$) 
with Newton polytope $N_n$. Thus,
\begin{equation}
\lbl{eq.rR}
\sum_{k=0}^d c_k R_{n+k}=0
\end{equation}
where $c_k \in \BQ[x_1^{\pm 1},\dots, x_r^{\pm 1}]$ for all $k=0,\dots,d$
and $c_d \, c_0 \neq 0$. Suppose first that $r=1$. Let us abbreviate $x_1$ 
by $x$. Consider the characteristic polynomial $p(z,x)$ of \eqref{eq.rR}
$$
p(z,x)= \sum_{k=0}^d c_k(x) z^k= c_d(x)\prod_{j} (z-\l_j(x))^{m_j}
$$
Its roots $\l_j(x)$ (each, with multiplicity $m_j$) are nonzero distinct 
algebraic functions of $x$. The field $E=\overline{\BQ(x)}$
of algebraic functions of $x$ has a {\em valuation} $v^*$ (resp. $v$)
given by the {\em minimum degree} (resp. {\em maximum degree})
with respect to $x$ of a polynomial of $x$, and then 
extended by additivity to the field of rational functions of $x$, and
further uniquely extended to $E$, the field of {\em Puiseux series}
of $x$; see \cite{Wa}. Note that if $R \in E$, then $v^*(R)$ 
is given by by the lowest power of $x$ in the series expansion
of $R(x)$ at $x=0$. Likewise, $v(R)$ is given by the negative of the
lowest power of $x$ in the series expansion of $R(1/x)$ at $x=\infty$. 
For example,
$$
v^*(x^2+x^7)=2, \qquad v(x^2+x^7)=7
$$
Now, the general solution of a linear recursion with constant coefficients 
is of the form
$$
R_n(x)=\sum_j P_j(x,n) \l_j(x)^n 
$$
where $P_j(x,n) \in E[n]$ is a {\em nonzero} polynomial of $n$ of degree 
$m_j-1$. Moreover, 
the Newton polygon $N_n$ of $R_n(x)$ is a line segment $[v^*(R_n),v(R_n)]$. 

Let us concentrate on the valuation $v^*$. 
The series expansion of $\l_j(x)$ and $P_j(x,n)$ at $x=0$ is given by 
\begin{align*}
\l_j(x)&=\a_j x^{\w_j} \left(1+ \sum_{k=1}^\infty c_{j,k} x^{k/r'}\right)
\\
P_j(x,n)&=x^{\b_j} \sum_{k=0}^\infty d_{j,k}(n) x^{k/r'} 
\end{align*}
where $\w_j=v^*(\l_j(x))$ and $\b_j=v^*(P_j(x,n))$. 
We partition the indexing set
$\{j=1,\dots, d\}=J_1 \sqcup J_2 \dots \sqcup J_s$ such that $v^*(\l_j(x))=\w_i$
for all $j \in J_i$ where $\w_1 < \w_2 \dots < \w_s$. 
Without loss of generality, we assume that $r'=1$ and $\b_j=0$
for all $j$, and that the coefficients of the above power series are defined
over a number field $K$. Observe that we can expand 

\begin{equation}
\lbl{eq.nlambda}
\begin{split}
\left(1+ \sum_{k=1}^\infty c_k x^k\right)^n & =1 + n c_1 x + 
\left(n c_2 + \frac{n(n-1)}{2} c_1^2\right) x^2 + \\ &
\left(n c_3 + n(n-1) c_1 c_2 + 
\frac{n(n-1)(n-2)}{6} c_1^3\right) x^3 +
\dots 
\end{split}
\end{equation}
into power series in $x$, where the coefficients are polynomials in $n$.
It follows that
\begin{align*}
R_n(x) &= \sum_{i=1}^s\sum_{k=0}^\infty x^{n \w_i} 
\left(\sum_{j \in J_i} \a_j^n c_{j,k}(n)\right) x^k
\\
&= \sum_{(i,k) \in S}  a_{i,k}(n)  x^{n \w_i+k} 
\end{align*}
where
$$
S=\{1,\dots,s\} \times \BN
$$ 
and
\begin{align*}
c_{j,k}(n) &=\coeff\left( P_j(x,n) \left(\frac{\l_j(x)}{
 \a_j x^{\w_j}}\right)^n, x^k \right) \in K[n] 
\\
a_{i,k}(n)&=\sum_{j \in J_i} \a_j^n c_{j,k}(n)
\end{align*}
Observe that for every $(i,k) \in S$,
$(a_{i,k}(n))$ is a generalized power sum with roots 
in a subset $\{\a_1,\dots,\a_d\}$ of $K^*$. It follows that there exists
a natural number $M$ such that for every $r$ with $0 \leq r \leq M-1$
and every $(i,k) \in S$, the recurrent sequence $a_{i,k}(nM+r)$ is either
zero or non-degenerate. It suffices to show that for every $r$ with
$0 \leq r \leq M-1$, $v^*(R_{nM+r}(x))$ is a linear function of $n$, for
all but finitely many $n$. 

Introduce the well-ordering of $S$ as follows: $(i,k) < (i',k')$ if
$i<i'$ or $i=i'$ and $k < k'$. Since
\begin{equation}
\lbl{eq.RnMk}
R_{nM+r}(x)= \sum_{(i,k) \in S}  a_{i,k}(nM+r)  x^{(nM+r) \w_i+k} 
\end{equation}
is nozero for all but finitely many $n$, it follows that there is a smallest
$(i,k) \in S$ such that $(a_{i,k}(nM+r))$ is not identically zero as a function
of $n$. Since $(a_{i,k}(nM+r))$ is non-degenerate, Theorem \ref{thm.deg2}
implies that $\{n \in \BN\, | a_{i,k}(nM+r)=0\}$ is a finite set, and for all
$n$ in its complement, Equation \eqref{eq.RnMk} implies that
$$
v^*(R_{nM+r}(x))=(nM+r) \w_i+k \,.
$$
It follows that the restriction of $v^*(R_n(x))$ to each arithmetic
progression $M \BN + r$ is a linear function of $n$ 
(for all but finitely many $n$), thus, $v^*(R_n(x))$ (and likewise,
$v(R_n(x))$ is quasi-linear for
all but finitely many values of $n$. 
 
We now reduce the general case of Theorem \ref{thm.3} to the case of
$r=1$. Consider a holonomic sequence 
$R_n(x_1,\dots,x_r) \in \BQ[x_1^{\pm 1},\dots,x_r^{\pm 1}]$ nonzero for all but 
finitely many $n$, and let $N_n$ denote the Newton polytope of $R_n$.
Fix a general weight vector $\w=(\w_1,\dots,\w_r)\in\BQ^r\setminus\{0\}$,
and consider the specialization $R_{\w,n}(t)=R_n(\e_{\w}(x))$. 

To state our next lemma, recall the notion of a {\em fan}, i.e., a 
finite collection of rational polyhedral cones with vertex at the origin,
whose union of their closures covers a fixed vector space, and whose
interiors are pairwise disjoint; see \cite{Zi}. Fans are
also known as constant coefficient tropical varieties; see \cite{SS}.

\begin{lemma}
\lbl{lem.vlambda}
There exists a fan in $\BR^r$ such that for every $j$, the restriction of
$\w \mapsto v^*(\l_j(\e_{\w}(x))$ and $\w \mapsto v(\l_j(\e_{\w}(x))$ 
to the interior of each maximal cone is a linear function.
\end{lemma}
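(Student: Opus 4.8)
The plan is to read the valuations $v^*(\l_j(\e_\w(x)))$ and $v(\l_j(\e_\w(x)))$ off the Newton polygon in the variable $z$ of the specialized characteristic polynomial $p(z,\e_\w(x))=\sum_{k=0}^d c_k(\e_\w(x))\,z^k$, and to check that every piece of polyhedral data that occurs is linear on the interior of each maximal cone of a suitable fan. First, write $c_k(x)=\sum_{\a}c_{k,\a}\,x^{\a}$ with $c_{k,\a}\in\BQ^*$, let $\Delta_k$ be the Newton polytope of $c_k$, and recall $c_0c_d\neq0$. Then $c_k(\e_\w(x))=\sum_\a c_{k,\a}\,t^{\w\cdot\a}$, so $v^*(c_k(\e_\w(x)))\geq \min_\a \w\cdot\a=-h_{\Delta_k}(-\w)$ and $v(c_k(\e_\w(x)))\leq\max_\a\w\cdot\a=h_{\Delta_k}(\w)$, with equality whenever $\w$ attains the corresponding extremum over $\Delta_k$ at a single vertex. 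I would take $\Sigma_0$ to be the common refinement of the normal fans of $\Delta_0,\dots,\Delta_d$; on the interior of each of its maximal cones $\w$ attains its minimum and its maximum over each $\Delta_k$ at a single vertex, so no cancellation occurs, $c_k(\e_\w(x))\neq0$ whenever $c_k\neq0$, and both $v^*(c_k(\e_\w(x)))=-h_{\Delta_k}(-\w)$ and $v(c_k(\e_\w(x)))=h_{\Delta_k}(\w)$ are linear functions of $\w$ there.

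Next I would fix a maximal cone $C$ of $\Sigma_0$ and look at the planar configuration $\{(k,\,v^*(c_k(\e_\w(x)))):c_k\neq0\}$, which contains the points with $k=0$ and $k=d$. Its lower convex hull $\Gamma(\w)$ is the Newton polygon of $p(z,\e_\w(x))$ over the field of Puiseux series in $t$, and by the theory of Newton polygons over valued fields the multiset of $v^*$-valuations of the roots of $p(z,\e_\w(x))$ (with multiplicity) consists of the negatives of the slopes of $\Gamma(\w)$, each with multiplicity the horizontal length of the corresponding edge. For a fixed candidate vertex sequence $0=k_0<k_1<\dots<k_p=d$, the set of $\w\in C$ for which $\Gamma(\w)$ has exactly these vertices is cut out inside $C$ by finitely many homogeneous linear conditions on $\w$: consecutive edge slopes strictly increase, and every omitted point lies weakly above $\Gamma(\w)$. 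The closures of these finitely many regions, formed over all maximal cones of $\Sigma_0$ and all admissible vertex sequences, assemble into a fan $\Sigma_1$ refining $\Sigma_0$. On the interior of a maximal cone $C'$ of $\Sigma_1$ the combinatorial type of $\Gamma(\w)$ is constant, hence each edge slope is a fixed linear function of $\w$ and these functions are pairwise distinct throughout $\mathrm{int}(C')$; so the $v^*$-valuations of the roots of $p(z,\e_\w(x))$ are, on $\mathrm{int}(C')$, a fixed family of pairwise non-intersecting linear functions of $\w$.

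It remains to pass from this statement about the multiset of all roots to the per-root assertion of the lemma. Here I would argue by connectedness: $\mathrm{int}(C')$ is connected, the candidate slope functions are pairwise non-intersecting on it, and $\w\mapsto v^*(\l_j(\e_\w(x)))$ is continuous — $\l_j$ is a fixed algebraic function of $x$, and the $v^*$-valuation of its monomial specialization varies continuously with $\w$, as do the valuations of the coefficients of its minimal polynomial — so $\w\mapsto v^*(\l_j(\e_\w(x)))$ must agree with a single one of the candidate linear functions on all of $\mathrm{int}(C')$, which is what we want. Running the same argument with $v$ and the upper convex hull in place of $v^*$ and the lower hull gives the corresponding statement for $v(\l_j(\e_\w(x)))$, and the common refinement of the two fans so produced is the required fan. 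The step I expect to be the main obstacle is exactly this last one: one must verify that for $\w$ in the interior of a maximal cone the monomial specialization $x\mapsto(t^{\w_1},\dots,t^{\w_r})$ interacts faithfully with $v^*$, so that the Newton-polygon description of the multiset of roots lifts to a consistent labeling of the branches $\l_j$; this is precisely where the no-cancellation observation of the first paragraph — valid on the open maximal cones and failing on their walls — is needed.
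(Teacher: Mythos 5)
Your proposal is correct in substance and takes essentially the same route as the paper: the paper writes $p(z,x)=\sum_{(\a,\b)}c_{\a,\b}x^{\a}z^{\b}$ and observes that $\min_{\a,\b}\{\a\cdot\w+\b\,v^*(\l_j(\e_{\w}(x)))\}$ is attained at least twice, which is exactly your Newton-polygon-over-the-Puiseux-field argument in compressed form, with the fan given by the linearity domains of the resulting piecewise-linear tropical roots (your refinement of the normal fans of the $\Delta_k$ by the combinatorial type of $\Gamma(\w)$). The per-branch continuity/labeling step you flag as the main obstacle is precisely the point the paper dispatches with ``the result follows,'' so your treatment is, if anything, more explicit than the original.
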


\begin{proof}
Without loss of generality, we will work with $v^*$.
Write the characteristic polynomial $p(z,x)$ in terms of its monomials:
$$
p(z,x)=\sum_{(\a,\b) \in \BZ^r \times \BN} c_{\a,\b} x^{\a} z^{\b}
$$
where the sum is finite and $c_{\a,\b} \neq 0$. Let $\l_j(x)$ satisfy 
$p(\l_j(x),x)=0$. Then, it follows that for each generic $\w$, 
$$
\min_{\a,\b} \{\a \cdot \w + \b v^*(\e_{\w}(\l_j(x)))\}
$$
is achieved at least twice. The result follows. 
\end{proof}
Continuing with the proof of Theorem \ref{thm.3}, 
it is easy
to see that for generic $\w$ (for example, when $c_d(\e_{\w}(x)) \neq 0$
in Equation \eqref{eq.rR}), the sequence $R_{\w,n}(t)$ is holonomic, and
its Newton polygon is given by $[-h_{N_n}(-\w),h_{N_n}(\w)]$, where
$h_{N_n}$ is the support function of $N_n$. This follows from the discussion
of Section \ref{sub.support}. Let $\w$ lie in the interior of a fixed
maximal cone of the fan of Lemma \ref{lem.vlambda}. Lemma \ref{lem.vlambda}
and the proof of the case of $r=1$ implies that for all $n$ sufficiently
large we have
\begin{equation}
\lbl{eq.homega}
-h_{N_n}(-\w)=\d^*(n)\cdot \w, \qquad
h_{N_n}(\w)=\d(n)\cdot \w
\end{equation}
where $\d^*(n)$ and $\d(n)$ are $r$-vectors of quasi-linear functions. 
Of course, $\d^*$ and $\d$ depend on the maximal cone of the above fan.
Now, fix a large enough $n=n_0$. Then, Equations \eqref{eq.hPmax}
and  \eqref{eq.homega} imply that for all $\w$ in the interior
of a maximal cone of the fan we have:
$$
h_{N_n}(\w)=\d(n)\cdot \w=\max\{\w \cdot v\,|v \in V_{N_n}\}
$$
Both are piecewise linear functions of $\w$ and the one on the right
jumps at a hyperplane normal to a facet (i.e., a maximal face) of $N_n$. It
follows that the set of normal vectors of the facets of $N_n$ is a subset
of the set of rays of the fan of Lemma \ref{lem.vlambda}. The latter is
a finite set independent of $n$. 

Consequently, $N_n$ is a sequence of polytopes with normal vectors in
a fixed finite set, and with $h$-function that satisfies Equation 
\eqref{eq.homega}, which is a locally (with respect to $\w$) quasi-linear
with respect to $n$. It follows from the Minknowski recustruction theorem
that the coordinates of every vertex of $N_n$ are quasi-linear functions
of $n$, for all but finitely many $n$. This concludes the proof
of Theorem \ref{thm.3}.
\qed

\begin{remark}
\lbl{rem.3}
The LMS theorem is used in the same way both in the proof of Theorem 
\ref{thm.3}, and in the proof of \cite[Thm.1]{Ga4}. This is not a coincidence.
In fact, a holonomic sequence $R_n(q) \in \BQ(q)$ of one variable is
also a $q$-holonomic sequence in the sense of \cite{Z,Ga4}, since it satisfies
a linear recursion with constant coefficients. The degree of a $q$-holonomic
sequence is a quadratic quasi-polynomial (see \cite{Ga4}), however
in the case of constant coefficients the slopes of the corresponding 
tropical curve are zero, and the degree is a linear quasi-polynomial.
Thus, the $r=1$ case of Theorem \ref{thm.3} follows from \cite[Thm.1]{Ga4}.
For completeness, we gave a proof of Theorem \ref{thm.3} when $r=1$
that avoids the general machinery of \cite{Ga4}.
\end{remark}


\bibliographystyle{hamsalpha}\bibliography{biblio}
\end{document}